\renewcommand\phi{\varphi}
\renewcommand\epsilon{\varepsilon}
\renewcommand\theta{\vartheta}
\newcommand\mbb{\mathbb}
\newcommand\mcal{\mathscr}
\newcommand\ul{\underline}
\newcommand\sA{\mcal{A}}
\newcommand\sS{\mcal{S}}
\newcommand\R{\mbb{R}}
 \newcommand{\ph}[0]{\varphi}
 \newcommand{\ep}[0]{\varepsilon}
 \newcommand{\de}[0]{\delta}
 \newcommand{\la}[0]{\lambda}
\DeclareMathOperator\Sym{Sym}
\DeclareMathOperator\interior{\rm int}
\DeclareMathOperator\relint{\rm relint}
\DeclareMathOperator\pr{pr}
\numberwithin{equation}{section}
\theoremstyle{plain}
\newtheorem{Thm}[equation]{Theorem}
\newtheorem{Prop}[equation]{Proposition}
\newtheorem{Cor}[equation]{Corollary}
\newtheorem{Lemma}[equation]{Lemma}
\newtheorem*{Thm*}{Theorem}
\newtheorem*{Satz*}{Satz}
\newtheorem*{Prop*}{Proposition}
\newtheorem*{Cor*}{Corollary}
\newtheorem*{Lemma*}{Lemma}
\newtheorem*{Hilfssatz*}{Lemma}
\newtheorem*{Sublemma*}{Sublemma}
\newtheorem*{Conjecture*}{Conjecture}
\theoremstyle{definition}
\newtheorem{Def}[equation]{Definition}
\newtheorem{Ex}[equation]{Example}
\newtheorem{LemmaDef}[equation]{Lemma and Definition}
\newtheorem{Rem}[equation]{Remark}
\newtheorem{Rems}[equation]{Remarks}
\newtheorem*{Def*}{Definition}
\newtheorem*{Defs*}{Definitions}
\newtheorem*{Ex*}{Example}
\newtheorem*{Exs*}{Examples}
\newtheorem*{LemmaDef*}{Lemma and Definition}
\newtheorem*{Notation*}{Notation}
\newtheorem*{Problem*}{Problem}
\newtheorem*{Question*}{Question}
\newtheorem*{Rem*}{Remark}
\newtheorem*{Rems*}{Remarks}
\newtheorem*{Warning*}{Warning}
\begin{document}
\title{On semidefinite representations of non-closed sets}
\author{Tim Netzer}
\address{Fachbereich Mathematik, Universit{\"a}t Konstanz, 78457 Konstanz, Germany}
\email{tim.netzer@uni-konstanz.de}
\subjclass[2000]{Primary 90C22, 15A48, 14P10; Secondary 13J30, 11E25}
\date{\today}
\keywords{semidefinite representations of sets, semidefinite programming, positive matrices}

\begin{abstract} Spectrahedra are sets defined by  \textit{linear matrix inequalities}. Projections of spectrahedra are called \textit{semidefinite representable sets}. Both kinds of sets are  of practical use in polynomial optimization, since they occur as feasible sets in semidefinite programming. There are several recent results on the question which sets are semidefinite representable. So far, all results focus on the case of closed sets.
In this work we develop a new method to prove semidefinite representability of sets which are not closed. For example, the interior of a semidefinite representable set is shown to be semidefinite representable. More general, one can remove faces of a semidefinite representable set and preserve semidefinite representability, as long as the faces are parametrized in a suitable way.
\end{abstract}

\maketitle

\section{Introduction}  A linear matrix polynomial  $\sA$ (of dimension $k$, in $n$ variables)  is a symmetric $k\times k$-matrix whose entries are affine linear polynomials over $\R$, in the variables $\ul X =(X_1,\ldots,X_n)$. Equivalenty, it is a linear polynomial in $\ul X$ with coefficients $A_i$ from $\Sym_{k}(\R)$, the space of real symmetric $k\times k$-matrices: $$\sA(\ul X) = A_0 + X_1\cdot A_1 + \ldots + X_n\cdot A_n.$$ For a linear matrix polynomial $\sA$, the set $$\sS(\sA)=\left\{ x\in\R^n\mid \sA(x)\succeq 0\right\}$$ is called a
\textit{spectrahedron} or an \textit{LMI set}. Here, $\succeq 0$ denotes positive semidefiniteness. A spectrahedron is thus a generalization of a polyhedron, which one would obtain by using a diagonal matrix polynomial $\sA$. By using non-diagonal matrices, one can have infinitely many linear inequalities defining $\sS(\sA)$, an inequality $y^t\sA(\ul X)y\geq 0$ for every $y\in\R^k$. 
One can also see spectrahedra as intersections of the cone of positive semidefinite matrices with an affine linear subspace of $\Sym_{k}(\R)$, where the affine subspace is parametrized by $x_1,\ldots, x_n$ (at least if $A_1,\ldots, A_n$ are linearly independent). So the cone of positive semidefinite symmetric $k\times k$-matrices is the standard model of a spectrahedron. 

Spectrahedra are always convex, semialgebraic and closed, even basic closed semialgebraic, i.e. defined by finitely many simultaneous polynomial inequalities. They are also \textit{rigidly convex}, a condition that was first introduced by Helton and Vinnikov \cite{MR2292953}. The authors show that rigid convexity is also sufficient for a two-dimensional set to be a spectrahedron. Lewis, Parrilo and Ramana   \cite{MR2146191} then observed that this proves the Lax conjecture. The question whether every rigidly convex set is a spectrahedron is open for higher dimensions.  

 Also the facial structure of spectrahedra is well known, see for example  Ramana and Goldman \cite{MR1342934}. The authors show that the faces of a spectrahedron are parametrized by subspaces of $\R^k$, and  that  all faces are exposed; see also Section \ref{convsec} below.

Spectrahedra  are of great importance in polynomial optimization. They occur as sets of feasible solutions in semidefinite optimization problems, which are generalizations of linear optimization problems. There exist efficient numerical algorithms to solve such problems, see Boyd, El Ghaoui, Feron and
              Balakrishnan \cite{MR1342934} and Vandenberghe and Boyd \cite{MR1379041} for more information.

Images of spectrahedra under linear projections are still useful for optimization. They  are of the form $$\left\{x\in\R^n\mid\exists y\in\R^m\ \sA(x,y)\succeq 0\right\} ,$$ for some linear matrix polynomial $\sA$ in $n+m$ variables.  Such sets are called \textit{semidefinite representable sets}, and they have recently gained a lot of attention.  Semidefinite representable sets are always convex and semialgebraic, but \textit{no other} necessary condition is known so far. Helton and Nie  \cite{HeltonNieNecSuffSDP} conjecture that \textit{every} convex semialgebraic set is semidefinite representable. 
So far, the following facts are known: 

(i) Every spectrahedron is semidefinite representable. Projections of semidefinite representable sets are semidefinite representable.

(ii) Finite intersections of semidefinite representable sets are semidefinite representable.

(iii) For certain semialgebraic sets $S$, Lasserre's method from \cite{LasserreConvSets} allows to explicitly construct a semidefinite representation, i.e. a spectrahedron that projects to $S$. The method works for \textit{basic closed} semialgebraic sets, i.e. sets defined by finitely many simultaneous polynomial inequalities, and involves sums of squares representions of linear polynomials. Helton and Nie \cite{HeltonNieSDPrepr} have used this method to prove semidefinite representability under certain curvature conditions on the defining inequalities of a set. However, the Lasserre method can only work if all faces of the convex set are exposed, see Netzer, Plaumann and Schweighofer \cite{NePlSch}. So there are basic closed semialgebraic convex sets for which the method fails.

(iv) The convex hull of a finite union of semidefinite representable sets is again semidefinite representable.  This is Helton and Nie \cite{HeltonNieNecSuffSDP}, see also  \cite{NeSi}. So one can apply the Lasserre method locally, at least for compact convex sets. Helton and Nie  \cite{HeltonNieNecSuffSDP} use this to prove additional curvature results.

These seem to be the most important facts on semidefinite representable sets so far. In particular there is a complete lack of results on the semidefinite representability of non-closed semialgebraic sets. In this work we start examining such sets.  We show that the relative interior of a semidefinite representable set is always semidefinite representable. The main result is then Theorem \ref{mainthm} below. It states the we can remove all faces of a semidefinite representable set, except those that are parametrized by another semidefinite representable set, and again obtain a semidefinite representable set. This result allows to produce many new examples.
We start with some helpful results on convex sets and semidefinite matrices.

\section{Lemmas on convex sets and positive semidefinite matrices}\label{convsec} In this section we state some easy (and probably well known) facts about convex sets and matrices. They will be used in Section \ref{open} below.

\begin{LemmaDef} \label{eins}Let $S\subseteq \R^n$ be convex. The \textit{relative interior}  $\relint(S)$ of $S$ is the subset of $S$ that forms the interior of $S$ in the affine hull of $S$.  So a point $x\in S$ belongs to $\relint(S)$ if and only if for all points $y\in S$ there is some $\ep >0$ such that $x+\ep(x-y)\in S$. If $z\in\relint(S)$ then another point $x\in S$ belongs to $\relint(S)$ if and only if there is some $\ep>0$ such that $x+\ep(x-z)\in S$.
 One has $S\subseteq \overline{\relint(S)}$. \end{LemmaDef}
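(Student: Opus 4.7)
The plan is to reduce all three assertions to one standard \emph{accessibility principle}: if $z \in \relint(S)$ and $w \in S$, then $(1-t)w + tz \in \relint(S)$ for every $t \in (0,1]$. Once this is in hand, together with the (also standard) fact that $\relint(S) \neq \emptyset$ whenever $S \neq \emptyset$, each of the three claims follows in a line or two.

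First I would prove accessibility. Translate so that $\mathrm{aff}(S)$ is a linear subspace $V \subseteq \R^n$; then $\relint(S)$ is by definition the $V$-interior of $S$, so there is a $V$-open neighborhood $U$ of $z$ contained in $S$. For $t \in (0,1]$ and $w \in S$, the set $(1-t)w + tU$ is a $V$-open neighborhood of $(1-t)w + tz$ and lies in $S$ by convexity, giving $(1-t)w + tz \in \relint(S)$. To see that $\relint(S)$ is nonempty, choose affinely independent points $y_0,\ldots,y_k \in S$ with $k = \dim\mathrm{aff}(S)$; the closed simplex $\Delta = \conv\{y_0,\ldots,y_k\} \subseteq S$ has the same affine hull as $S$, and its barycenter $b = \frac{1}{k+1}\sum_i y_i$ is in the interior of $\Delta$ relative to $\mathrm{aff}(S)$, hence in $\relint(S)$.

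Now the three assertions: for the first characterization, the direction $x \in \relint(S) \Rightarrow$ condition is immediate because $x + \ep(x-y)$ stays in a $V$-neighborhood of $x$ for small $\ep$; for the converse, apply the hypothesis with $y = b$ to get $w := x + \ep(x-b) \in S$, then write $x = \frac{\ep}{1+\ep} b + \frac{1}{1+\ep} w$ as a point on the open segment from $b \in \relint(S)$ to $w \in S$, and conclude $x \in \relint(S)$ by accessibility. For the second characterization, the forward direction is the first characterization applied to $y=z$; the reverse direction writes $x = \frac{1}{1+\ep} w + \frac{\ep}{1+\ep} z$ for $w := x + \ep(x-z) \in S$ and again invokes accessibility. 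Finally, $S \subseteq \overline{\relint(S)}$: given $x \in S$, pick any $b \in \relint(S)$ and note that $(1-t)x + tb \in \relint(S)$ for $t \in (0,1]$ by accessibility, and these points converge to $x$ as $t \to 0$.

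The main subtlety is the reverse direction of the first characterization: the hypothesis gives only a one-sided local condition in every direction from $x$ and says nothing directly about a full neighborhood, so we cannot conclude $x \in \relint(S)$ without first producing \emph{some} relative interior point; it is exactly the barycentric construction together with accessibility that bridges this gap.
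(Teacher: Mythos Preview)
Your proof is correct. The paper itself gives no argument at all---its ``proof'' is the single sentence ``This is an easy exercise''---so your detailed reduction to the accessibility principle together with nonemptiness of $\relint(S)$ is exactly the kind of verification the paper omits, and each step checks out.
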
 \begin{proof}This is an easy exercise.\end{proof}

\begin{Lemma}\label{dense}  Let $S\subseteq \R^n$ be a convex set and let $T$ be a convex subset of $S$ which is dense in $S$. Then $T$ contains the relative interior $\relint(S)$ of $S$.
\end{Lemma}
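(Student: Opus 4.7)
The plan is to take an arbitrary $x\in\relint(S)$ and exhibit it as a convex combination with strictly positive weights of finitely many points of $T$; since $T$ is convex, this immediately forces $x\in T$. First I would observe that $T$ and $S$ share the same affine hull $V$, because density of $T$ in $S$ gives $S\subseteq\ol{T}$. Set $d:=\dim V$. As $x\in\relint(S)$, there is an open neighbourhood (inside $V$) of $x$ contained in $S$. Inside this neighbourhood I can pick affinely independent points $y_0,\ldots,y_d\in S$ whose convex hull $\Delta_y:=\conv(y_0,\ldots,y_d)$ contains $x$ in its relative interior, so that $x=\sum_{i=0}^d\lambda_i y_i$ with $\lambda_i>0$ and $\sum_i\lambda_i=1$.

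Next, invoking density of $T$ in $S$, I would approximate each $y_i$ by a point $z_i\in T$. Since $T$ is convex, $\Delta_z:=\conv(z_0,\ldots,z_d)\subseteq T$, so it is enough to show that $x\in\Delta_z$ once the $z_i$ are chosen close enough to the $y_i$. This is the only step that is not entirely formal: one must check that the strict positivity of the barycentric coordinates of $x$ is preserved under a small perturbation of the vertices of the simplex. This is a continuity statement in linear algebra. On an affinely independent frame, the barycentric coordinates of a fixed point depend continuously on the frame; hence for $z_i$ sufficiently close to $y_i$, the $z_i$ remain affinely independent and the new barycentric coordinates of $x$ with respect to $(z_0,\ldots,z_d)$ stay close to $(\lambda_0,\ldots,\lambda_d)$, in particular still positive. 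This places $x$ in the relative interior of $\Delta_z$, and therefore in $T$.

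I do not expect any serious obstacle. The whole argument reduces to the basic perturbation fact about barycentric coordinates; everything else is bookkeeping. The main point to handle carefully is that a purely sequential use of density is not sufficient -- density would only give $x\in\ol{T}$, not $x\in T$. It is precisely the relative interior hypothesis which lets one build a full-dimensional simplex in $T$ whose relative interior actually contains $x$, thereby upgrading the approximation to an honest membership.
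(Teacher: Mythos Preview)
Your argument is correct, but it follows a different route from the paper's. The paper argues by contradiction via separation: assuming some $x\in\relint(S)\setminus T$, one separates $x$ from $T$ by an affine linear functional $\ell$ with $\ell\ge 0$ on $T$ and $\ell(x)\le 0$, then uses $x\in\relint(S)$ to push slightly past $x$ along the segment from some $y\in T$ with $\ell(y)>0$, landing at a point of $S\subseteq\ol{T}$ where $\ell$ is strictly negative --- a contradiction. Your approach is instead constructive: build a full-dimensional simplex in $S$ with $x$ in its relative interior, perturb the vertices into $T$ by density, and appeal to continuity of barycentric coordinates to keep $x$ inside the perturbed simplex, which lies in $T$ by convexity. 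Both proofs are short and standard; the separation argument is marginally slicker (one line once the separating functional is in hand) but imports the separation theorem, whereas your simplex argument is more elementary and self-contained, at the modest cost of the perturbation lemma on barycentric coordinates. Either would be perfectly acceptable here.
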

\begin{proof} Without loss of generality assume that $S$ and therefore also $T$ has nonempty interior in $\R^n$. Now assume for contradiction that there is some $x\in\interior(S)$ that does not belong to $T$. Then by separation of disjoint convex sets, we find an affine linear polynomial $0\neq\ell\in\R[\ul X]$ with $\ell(x)\leq 0$ and $\ell\geq 0$ on $T$. Since $T$ has nonempty interior there is some $y\in T$ with $\ell(y)>0$. Since $T\subseteq S$ and $x\in \interior(S)$ we find some $\ep>0$ such that $y':=x+\ep(x-y)\in S$. Since $\ell(y')<0$ and $\ell\geq 0$ on $\overline{T} $, this contradicts $S\subseteq \overline{T}$.
\end{proof}

\begin{Cor}\label{intproj} Let $S\subseteq \R^{m}$ be convex and let $\ph\colon\R^m\rightarrow \R^n$ be a linear map. Then $$\ph(\relint(S))=\relint(\ph(S)).$$
\end{Cor}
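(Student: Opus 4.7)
The plan is to prove the two inclusions separately, with the reverse inclusion leaning on Lemma \ref{dense}.

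For the forward inclusion $\ph(\relint(S))\subseteq\relint(\ph(S))$, I would argue directly from the characterization of the relative interior in Lemma and Definition \ref{eins}. Take $x\in\relint(S)$ and an arbitrary point $y'\in\ph(S)$, which I can write as $y'=\ph(y)$ for some $y\in S$. The hypothesis supplies some $\ep>0$ with $x+\ep(x-y)\in S$, and applying $\ph$ yields $\ph(x)+\ep(\ph(x)-y')\in\ph(S)$. This is exactly the condition showing $\ph(x)\in\relint(\ph(S))$.

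For the reverse inclusion $\relint(\ph(S))\subseteq \ph(\relint(S))$, the plan is to set $T:=\ph(\relint(S))$ and verify the hypotheses of Lemma \ref{dense} inside the convex set $\ph(S)\subseteq\R^n$. The set $T$ is convex because $\relint(S)$ is convex and $\ph$ is linear. To see that $T$ is dense in $\ph(S)$, I would use the last sentence of Lemma and Definition \ref{eins}, namely $S\subseteq\overline{\relint(S)}$, together with the continuity of the linear map $\ph$: since $\ph$ is continuous, $\ph(\overline{\relint(S)})\subseteq\overline{\ph(\relint(S))}=\overline{T}$, so $\ph(S)\subseteq\overline{T}$. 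Lemma \ref{dense} then immediately gives $\relint(\ph(S))\subseteq T=\ph(\relint(S))$.

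I do not expect any genuine obstacle; both directions are short once one unravels the definitions. The only point that requires a little care is the switch in ambient space (from $\R^m$ to $\R^n$) in the reverse inclusion — in particular, one should not assume $\ph(S)$ has nonempty interior in $\R^n$, but Lemma \ref{dense} is formulated precisely so that its conclusion about the relative interior is what is needed, regardless of the dimension of the affine hull of $\ph(S)$.
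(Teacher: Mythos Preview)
Your proposal is correct and follows essentially the same approach as the paper: the paper simply declares the forward inclusion ``clear'' (which you spell out via Lemma~\ref{eins}), and for the reverse inclusion it applies Lemma~\ref{dense} exactly as you do, noting that $\ph(\relint(S))$ is convex and dense in $\ph(S)$.
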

\begin{proof} The inclusion "$\subseteq$" is clear. For "$\supseteq$" notice that since $\relint(S)$ is convex and dense in $S$, $\ph(\relint(S))$ is a convex and dense subset of $\ph(S)$. So the claim follows from Lemma \ref{dense}.
\end{proof}

\begin{Def} Let $S\subseteq \R^n$ be a convex set. A \textit{face of $S$} is a nonempty convex subset $F\subseteq S$ with the following property: for any $x,y\in S$ and $ \la\in (0,1)$, if  $\la x +(1-\la)y\in F$ then $x,y\in F$. 

A face $F$ of $S$ is \textit{exposed}, if either $F=S$ or there is a supporting  hyperplane $H$ of $S$ in $\R^n$ such that $S\cap H=F$. This is equivalent to the existence of an affine linear polynomial $\ell\in\R[\ul X]$ with $\ell\geq 0$ on $S$ and $S\cap \{\ell=0\} =F$.

 \end{Def}
\begin{Lemma}
For every point $x\in S$ there is a unique face $F_x$ of $S$ that contains $x$ in its relative interior. $F_x$ consist precisely of the points $y\in S$ for which there is some $\ep>0$ such that $x+\ep(x-y)\in S$.
\end{Lemma}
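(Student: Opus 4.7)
The plan is to define the candidate explicitly as $F_x := \{y\in S \mid \exists\,\ep>0,\ x+\ep(x-y)\in S\}$ and verify in order that $F_x$ is convex, that it is a face of $S$, that it contains $x$ in its relative interior, and that this property characterizes it uniquely.

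Convexity of $F_x$ is direct: given $y_1,y_2\in F_x$ with witnesses $\ep_1,\ep_2>0$, the choice $\ep=\min(\ep_1,\ep_2)$ makes $x+\ep(x-y_i)\in S$ for both $i$ (convexity of $S$), and averaging these two points with weights $\la,1-\la$ yields $x+\ep(x-(\la y_1+(1-\la)y_2))\in S$, so the convex combination lies in $F_x$.

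The face property is the main obstacle. Suppose $z:=\la y_1+(1-\la)y_2\in F_x$ with $y_1,y_2\in S$ and $\la\in(0,1)$, and let $w:=x+\ep(x-z)\in S$. I would produce an explicit $\mu\in(0,1)$ and $\ep'>0$, depending on $\ep$ and $\la$, with $x+\ep'(x-y_1)=\mu w+(1-\mu)y_2$; matching the coefficients of $x$, $y_1$ and $y_2$ in this identity gives a short linear computation that forces $\mu=1/(1+\ep(1-\la))$ and $\ep'=\ep\la/(1+\ep(1-\la))$, both positive. Convexity of $S$ then puts $x+\ep'(x-y_1)$ in $S$, so $y_1\in F_x$, and symmetrically $y_2\in F_x$. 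This is the one nonobvious piece of the argument; everything else is more or less forced by Lemma and Definition~\ref{eins}.

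To show $x\in\relint(F_x)$, by the criterion from Lemma and Definition~\ref{eins} applied to $F_x$ it suffices that for every $y\in F_x$ there is some $\ep>0$ with $x+\ep(x-y)\in F_x$. I pick the $\ep$ provided by the definition of $F_x$, setting $z:=x+\ep(x-y)\in S$; to see $z\in F_x$ observe that for small $\de>0$ the point $x+\de(x-z)=(1-\de\ep)x+\de\ep\,y$ is a convex combination of $x,y\in S$, hence in $S$. For uniqueness, if $F'$ is any face of $S$ with $x\in\relint(F')$, then Lemma and Definition~\ref{eins} applied to $F'$ yields $F'\subseteq F_x$ at once. Conversely, for $y\in F_x$ with witness $\ep>0$, the decomposition $x=\tfrac{1}{1+\ep}(x+\ep(x-y))+\tfrac{\ep}{1+\ep}y$ expresses $x\in F'$ as a strict convex combination of two points of $S$, one being $y$, so the face property of $F'$ forces $y\in F'$. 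Hence $F'=F_x$.
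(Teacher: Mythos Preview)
Your argument is correct and complete. The paper itself gives no proof beyond ``Again an easy exercise,'' so there is nothing to compare against; your write-up supplies exactly the routine verification the authors omit. One stylistic remark: in the face-property step you say that matching coefficients ``forces'' the values of $\mu$ and $\ep'$, but of course $x,y_1,y_2$ need not be affinely independent---what matters (and what your computation actually shows) is that with those specific choices the identity $x+\ep'(x-y_1)=\mu w+(1-\mu)y_2$ holds as a formal affine identity, which is all you need.
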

\begin{proof} Again an easy exercise. \end{proof}

If $S\subseteq\R^n$ is a spectrahedron, defined by the $k$-dimensional linear matrix inequality $\sA(\ul X)\succeq 0$, then every face of $S$ is of the form $$F_U=\{x\in S\mid U\subseteq \ker \sA(x)\}$$ for some subspace $U$ of $\R^k,$ and one has $F_x=F_{\ker \sA(x)}$ for all $x\in S$; every face of $S$ is exposed  (see \cite{MR1342934} and also \cite{NePlSch}).

We now turn to matrices. 
 The next Proposition will be crucial for the results in Section \ref{open}.
 
 \begin{Prop} \label{prop}Let $A\in\Sym_k(\R)$ and $B\in \R^{m\times k}$. Let $I_m$ denote the identity matrix of dimension $m$. Then the following are equivalent:
 \begin{itemize}
 \item[(i)] there is some $\lambda\in\R$ such that $\left(\begin{array}{c|c} A& B^t \\\hline B & \lambda \cdot I_m\end{array}\right)\succeq 0$
 \item[(ii)] $A\succeq 0$ and $\ker A\subseteq \ker B$
 \end{itemize}
 \end{Prop}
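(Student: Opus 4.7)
The plan is to prove the two implications separately, using standard block-matrix tricks (essentially a Schur-complement argument, adapted to the case where $A$ is possibly singular).

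For (i)$\Rightarrow$(ii), the inequality $A \succeq 0$ is immediate by looking at the leading principal $k \times k$ block of the big matrix. To get $\ker A \subseteq \ker B$, take any $v \in \ker A$ and evaluate the quadratic form of the block matrix on the vector $(v, -t \cdot Bv)^t$ for a real parameter $t$. Since $v^t A v = 0$, this yields an expression of the form $-2t\|Bv\|^2 + \lambda t^2 \|Bv\|^2$, which must be nonnegative for all $t \in \R$. Letting $t \to 0^+$ forces $\|Bv\|^2 = 0$, i.e. $Bv = 0$.

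For (ii)$\Rightarrow$(i), the key observation is that $\ker A \subseteq \ker B$ is equivalent (by taking orthogonal complements and using that $A$ is symmetric, so $\imag(A) = (\ker A)^\perp$) to $\imag(B^t) \subseteq \imag(A)$. Hence the matrix equation $AC = B^t$ admits a solution $C \in \R^{k \times m}$. With such a $C$ in hand, I would verify the identity
\[
\begin{pmatrix} A & B^t \\ B & \lambda I_m \end{pmatrix}
=
\begin{pmatrix} I & 0 \\ C^t & I \end{pmatrix}
\begin{pmatrix} A & 0 \\ 0 & \lambda I_m - C^t A C \end{pmatrix}
\begin{pmatrix} I & C \\ 0 & I \end{pmatrix}
\]
by direct computation. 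Since the outer factors are invertible and mutually transposed, this is a congruence, so the block matrix is positive semidefinite if and only if the middle block-diagonal matrix is. Given $A \succeq 0$, it suffices to choose $\lambda$ large enough that $\lambda I_m \succeq C^t A C$, e.g.\ any $\lambda$ exceeding the largest eigenvalue of the symmetric matrix $C^t A C$.

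The only real subtlety is the singular case of $A$, which is precisely what forces the condition $\ker A \subseteq \ker B$ to appear; once one rephrases it as $\imag(B^t) \subseteq \imag(A)$ and writes $B^t = AC$, everything reduces to the familiar Schur-complement identity. I don't expect any genuine obstacle beyond being careful that the congruence matrices above really are mutual transposes.
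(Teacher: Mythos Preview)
Your proof is correct and follows the same Schur-complement idea as the paper. The paper's version is terser because it outsources the work: it cites Albert's theorem to rewrite (i) as $A\succeq 0$, $B=BA^{\dagger}A$, and $\lambda I_m - BA^{\dagger}B^t\succeq 0$ (with $A^{\dagger}$ the Moore--Penrose pseudoinverse), and then a second reference for the equivalence $B=BA^{\dagger}A \Leftrightarrow \ker A\subseteq\ker B$. Your argument is the self-contained unpacking of exactly this---your $C$ plays the role of $A^{\dagger}B^t$ and your congruence identity is the explicit factorization behind Albert's result---so you trade the paper's brevity for an elementary proof that avoids the pseudoinverse formalism entirely.
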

 \begin{proof} By Theorem 1 in Albert \cite{MR0245582}, (i) is equivalent to the existence of some $\la$ such that $$A\succeq 0,\ B=BA^{\dagger}A, \ \la\cdot I_m -BA^{\dagger}B^t\succeq 0,$$ where $A^{\dagger}$ denotes the Penrose-Moore pseudoinverse matrix to $A$. By Theorem 9.17 in Ahlbrandt and Peterson \cite{MR1423802}, condition $B=BA^{\dagger}A$ is equivalent to $\ker A\subseteq\ker B$. Finally, one can always choose some big enough $\la$ to insure $\la\cdot I_m -BA^{\dagger}B^t\succeq 0$, which proves the Proposition. 
  \end{proof}

\section{Non-closed semidefinite representable sets}\label{open}

All of the existing results on semidefinite representations of sets concern \textit{closed} sets. Our goal in this section is to start examining non-closed sets.

The following easy result states that we can always remove faces of semidefinite representable sets, and still obtain semidefinite representability. It does not use the results from Section \ref{convsec}  yet.

\begin{Prop}\label{faceoff} If $S$ is semidefinite representable and $F$ is a face of $S$, then $F$ and $S\setminus F$ are semidefinite representable.
\end{Prop}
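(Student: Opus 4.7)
The plan is to lift everything to a spectrahedron, exploit the fact (recalled in the paper after the face definition) that every face of a spectrahedron is exposed, and then handle the open complement by a small $2\times 2$ Schur-type trick.

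Write $S=\pi(\wt{S})$ where $\wt{S}=\{(x,y)\in\R^{n+m}\mid \sA(x,y)\succeq 0\}$ is a spectrahedron and $\pi\colon\R^{n+m}\to\R^n$ is the projection. My first step is to check that $\wt{F}:=\pi^{-1}(F)\cap\wt{S}$ is a face of $\wt{S}$. This is essentially the routine observation that the preimage under an affine map of a face is again a face: if $\la p_1+(1-\la)p_2\in\wt{F}$ with $p_1,p_2\in\wt{S}$ and $\la\in(0,1)$, then $\la\pi(p_1)+(1-\la)\pi(p_2)\in F$, so $\pi(p_1),\pi(p_2)\in F$ since $F$ is a face of $S$, hence $p_1,p_2\in\wt{F}$.

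Since $\wt{S}$ is a spectrahedron, every face of $\wt{S}$ is exposed, so there is an affine linear polynomial $\ell\in\R[\ul X,\ul Y]$ with $\ell\ge 0$ on $\wt{S}$ and $\wt{F}=\wt{S}\cap\{\ell=0\}$. Enlarging the matrix polynomial $\sA$ by adjoining the scalar block $-\ell(x,y)$ gives a spectrahedron description of $\wt{F}$, and $F=\pi(\wt{F})$ is therefore semidefinite representable.

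For $S\setminus F$ the key point is a representation of the open condition $\ell(x,y)>0$ by an LMI in an additional scalar variable $t$. Using the positivity of the $2\times 2$ symmetric matrix
\[
\begin{pmatrix} \ell(x,y) & 1 \\ 1 & t \end{pmatrix}\succeq 0,
\]
one checks that such a $t\in\R$ exists if and only if $\ell(x,y)>0$: the diagonal entries force $\ell(x,y)\ge 0$ and $t\ge 0$, and the determinant condition $\ell(x,y)\cdot t\ge 1$ rules out $\ell(x,y)=0$ while being achievable (e.g.\ by $t=2/\ell(x,y)$) whenever $\ell(x,y)>0$. A point $x$ lies in $S\setminus F$ exactly when there is some $(y,t)$ such that both $\sA(x,y)\succeq 0$ and the above $2\times 2$ matrix are positive semidefinite, which gives a semidefinite representation of $S\setminus F$ by forming the block-diagonal matrix polynomial.

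There is no real obstacle to anticipate: the only non-trivial ingredients are the passage from the face $F$ of $S$ to the exposed face $\wt{F}$ of $\wt{S}$ (where the exposedness of faces of spectrahedra, recalled in the paper, is essential), and the $2\times 2$ block trick that turns the strict inequality $\ell>0$ into a linear matrix inequality after one extra projection variable.
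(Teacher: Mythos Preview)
Your proof is correct and follows essentially the same approach as the paper: lift $F$ to the face $\wt{F}=\pi^{-1}(F)\cap\wt{S}$ of the spectrahedron $\wt{S}$, use exposedness of faces of spectrahedra to get a supporting linear functional $\ell$, and then express $\ell>0$ via the $2\times 2$ Schur trick with one extra variable. The only organizational difference is that the paper first treats the spectrahedron case explicitly and then reduces the general case to it, whereas you merge the two steps; you also spell out the verification that $\wt{F}$ is a face and that the $2\times 2$ block encodes $\ell>0$, which the paper leaves implicit.
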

\begin{proof}  First assume that $S$ is a spectrahedron, defined by the linear matrix polynomial $\sA$. Then $F$ is an exposed face of $S$ (by \cite{MR1342934}, Corollary 1), which means that there is an affine linear polynomial $\ell\in\R[\underline{X}]$ such that $\ell\geq 0$ on $S$ and $\{\ell=0\}\cap S=F.$ So we have $$ F=\left\{x\in\R^n\mid \sA(x)\succeq 0 \wedge \ell(x)=0\right\}$$ and $$S\setminus F=\left\{x\in \R^n\mid \sA(x)\succeq 0 \wedge  \exists  \lambda \left(\begin{array}{cc}\lambda  & 1 \\1 & \ell(x)\end{array}\right)\succeq 0 \right\}.$$ This shows that $F$ is even a spectrahedron and $S\setminus F$ is semidefinite representable.

Now let $S$ be semidefinite representable and let $\widetilde{S}\subseteq \R^{n+m}$ be a spectrahedron such that $S$ is the image of $\widetilde{S}$ with respect to the projection $\pr\colon \R^{n+m}\rightarrow \R^n$. Then $\widetilde{F}:=\pr^{-1}(F)\cap \widetilde{S}$ is a face of $\widetilde{S}$. Since $\widetilde{F}$ projects onto $F$  and  $\widetilde{S}\setminus \widetilde{F}$ projects onto $S\setminus F$,  both sets are semidefinite representable.
\end{proof}

For a semidefinite representable set with only finitely many faces, i.e. for a polyhedron, we thus know that its interior is again semidefinite representable. But this result is true in general:
 \begin{Prop}\label{sdpint} If $S$ is semidefinite representable, then $\relint(S)$ is also semidefinite representable.
\end{Prop}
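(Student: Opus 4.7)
The plan is to reduce first to the case of a spectrahedron, and then use Proposition \ref{prop} to handle the kernel condition that characterizes the relative interior.

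First I would invoke Corollary \ref{intproj}: if $S = \pr(\widetilde S)$ for a spectrahedron $\widetilde S \subseteq \R^{n+m}$, then $\relint(S) = \pr(\relint(\widetilde S))$. Since projections of semidefinite representable sets are semidefinite representable, it suffices to prove the result for $S$ a spectrahedron, say $S = \sS(\sA)$ with $\sA$ a $k$-dimensional linear matrix polynomial.

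Next I would identify $\relint(S)$ in terms of kernels. Let
$$U_0 := \bigcap_{y\in S} \ker \sA(y) \subseteq \R^k.$$
Using the facial description recalled in the excerpt ($F_x = F_{\ker\sA(x)}$, and every face is of the form $F_U = \{y\in S \mid U \subseteq \ker \sA(y)\}$), one has $x \in \relint(S)$ if and only if $F_x = S$, which translates into $\ker\sA(x) \subseteq \ker\sA(y)$ for all $y\in S$, i.e.\ $\ker\sA(x) \subseteq U_0$ (and hence $=U_0$, since the reverse inclusion is automatic). So
$$\relint(S) = \{x \in \R^n \mid \sA(x)\succeq 0 \text{ and } \ker\sA(x) \subseteq U_0\}.$$

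Now I would choose once and for all a matrix $B\in\R^{m\times k}$ with $\ker B = U_0$ (take the rows of $B$ to be a basis of $U_0^\perp$). By Proposition \ref{prop}, the condition "$\sA(x)\succeq 0$ and $\ker\sA(x)\subseteq\ker B$" is equivalent to the existence of $\lambda\in\R$ with
$$\left(\begin{array}{c|c} \sA(x) & B^t \\ \hline B & \lambda \cdot I_m \end{array}\right)\succeq 0.$$
The right-hand side is a linear matrix inequality in the variables $(x,\lambda)$, so its feasible set is a spectrahedron in $\R^{n+1}$; projecting out $\lambda$ exhibits $\relint(S)$ as a semidefinite representable set. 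Combining with the first paragraph gives the general statement. The empty case is trivial.

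The only nontrivial step is translating "$x\in\relint(S)$" into the kernel inclusion $\ker\sA(x)\subseteq U_0$; everything else is a direct application of the quoted facial description and of Proposition \ref{prop}, so I do not expect a real obstacle.
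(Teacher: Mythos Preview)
Your argument is correct, but it is not the route the paper takes for this proposition. The paper's own proof of Proposition \ref{sdpint} is more elementary: it fixes a point $z\in\relint(S)$, uses the characterization $\relint(S)=\{x\in S\mid \exists\,\epsilon>0:\ x+\epsilon(x-z)\in S\}$ from Lemma \ref{eins}, and then rewrites $\sA(x+\epsilon(x-z))\succeq 0$ via the substitution $\delta=1/(1+\epsilon)$ as a linear matrix inequality in $(x,\delta)$, with the open condition $\delta\in(0,1)$ encoded by two auxiliary $2\times 2$ LMIs. Neither the facial description of spectrahedra nor Proposition \ref{prop} is used.

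Your approach instead identifies $\relint(S)$ with the kernel condition $\ker\sA(x)\subseteq U_0$ and then invokes Proposition \ref{prop}; this is precisely the method of Proposition \ref{sub}, and the paper itself points out in the remark following Proposition \ref{sub} that this gives an alternative proof of Proposition \ref{sdpint}. What you gain is conceptual cleanliness and a direct link to the face-removal theme of the paper; what the paper's direct proof gains is independence from the structural results on faces and from Proposition \ref{prop}, together with explicit quantitative control (a spectrahedron in two more variables and matrix dimension increased by $4$). Both reductions to the spectrahedral case via Corollary \ref{intproj} are identical.
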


\begin{proof}First assume that $S$ is a spectrahedron, defined by the matrix polynomial  $\sA(\ul X)=A_0 +X_1A_1 +\ldots + X_n A_n.$ Fix a point $z\in \relint(S)$. By Lemma \ref{eins}, $\relint(S)$ has the following description: $$\relint(S)=\left\{ x\in S\mid \exists \ep>0\ x+\ep(x-z)\in S\right\}.$$ For $\ep>0$ we have $\sA(x+\ep(x-z))\succeq 0$ if and only if $\frac{1}{1+\ep}\cdot \sA(x+\ep(x-z))\succeq 0$, and  \begin{align*}  \frac{1}{1+\ep}\cdot \sA(x+\ep(x-z)) &=  \left( \frac{1}{1+\ep}  \right)\cdot A_0 +x_1A_1 +\cdots +x_nA_n \\ &\quad -\left(\frac{\ep}{1+\ep}\right)\cdot \left( z_1A_1+\cdots +z_nA_n\right).\end{align*}  Making the transformation $\de:=\frac{1}{1+\ep}$ and writing $B:= -(z_1A_1+\cdots +z_nA_n)$ we find $$ \relint(S)=\left\{x\in \R^n\mid \exists \delta\in (0,1)\quad  \de A_0 + x_1A_1+\cdots + x_nA_n + (1-\de)B\succeq 0\right\}.$$ Since the condition $\de\in (0,1)$ can be translated into $$\exists\la \   \left(\begin{array}{cc}\la & 1 \\1 & \delta\end{array}\right) \succeq 0 \wedge \left(\begin{array}{cc}\la & 1 \\1 & 1- \delta\end{array}\right)\succeq 0,$$ this is clearly a semidefinite representation of $\relint(S)$.

Now let $S$ be semidefinite representable and suppose $\widetilde{S}\subseteq \R^{n+m}$ is a spectrahedron that projects to $S$. Then $\relint(\widetilde{S})$ projects onto $\relint(S)$, by Corollary \ref{intproj}. Since we already know that $\relint(\widetilde{S})$ is semidefinite representable, this proves the claim.
\end{proof}

\begin{Rem}We also have some quantitative information in this last result. Assume that $S\subseteq\R^n$ is semidefinite representable and $\widetilde{S}\subseteq\R^{n+m}$ is a spectrahedron that projects to $S$. If $\widetilde{S}$  is defined by a $k$-dimensional linear matrix polynomial, then $\relint(S)$ is the image of a spectrahedron in $\R^{n+m+2},$ defined by a linear matrix polynomial of dimension $k+4$.  This is clear from the proof of Proposition \ref{sdpint}. \end{Rem}

\begin{Rem} We could also try to quantify the element $z$ in the proof of Proposition \ref{sdpint}, instead of only using one fixed $z$ from $\relint(S)$. This would allow us to be more sophisticated in removing faces of $S$. However, the approach from the proof doesn't  seem to work then. It relies on the fact that we consider $z$ as a fixed parameter. Otherwise we can not get rid of the product $(1+\ep)x$ by dividing through $1+\ep$. However, we can still prove something better, using a different method. This is our main result, Theorem \ref{mainthm} below.
\end{Rem}

By now we have shown that we can remove \textit{finitely many faces} or \textit{all  faces} of codimension $\geq 1$ from a semidefinite representable set, and obtain a semidefinite representable set. But with the results from the previous section we can prove more. We start with spectrahedra (recall the notations from Section \ref{convsec}):

\begin{Prop}\label{sub}Let $S$ be defined by the $k$-dimensional linear matrix polynomial $\sA(\ul X)$. Then for every subspace $W$ of $\R^k$, the set
 $$\left\{x\in S\mid \ker \sA(x)\subseteq W\right\}=S\setminus \bigcup_{U \nsubseteq W}F_U$$ is semidefinite representable.
\end{Prop}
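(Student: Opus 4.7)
The plan is to apply Proposition \ref{prop} directly, after encoding the subspace $W$ as the kernel of a suitable matrix.

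Concretely, set $m=k-\dim W$ and choose a matrix $B\in\R^{m\times k}$ whose rows form a basis of $W^\perp$, so that $W=\ker B$. Then the condition $\ker\sA(x)\subseteq W$ is nothing but $\ker\sA(x)\subseteq\ker B$. By Proposition \ref{prop}, applied with $A=\sA(x)$, this condition together with $\sA(x)\succeq 0$ is equivalent to the existence of some $\la\in\R$ with
$$
\left(\begin{array}{c|c} \sA(x)& B^t \\\hline B & \la\cdot I_m\end{array}\right)\succeq 0.
$$
The block matrix on the left is a linear matrix polynomial in the $n+1$ variables $(X_1,\ldots,X_n,\La)$, since $B$ and $B^t$ are constant and $\la\cdot I_m$ is linear in $\la$. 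Thus $\{x\in S\mid \ker\sA(x)\subseteq W\}$ is the image under the projection $\R^{n+1}\to\R^n$ of a spectrahedron, hence semidefinite representable.

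It only remains to verify the set-theoretic identity
$$
\{x\in S\mid \ker\sA(x)\subseteq W\}=S\setminus\bigcup_{U\nsubseteq W}F_U.
$$
Recall from Section \ref{convsec} that $F_U=\{x\in S\mid U\subseteq\ker\sA(x)\}$. If $\ker\sA(x)\subseteq W$, then for any subspace $U\nsubseteq W$ one cannot have $U\subseteq\ker\sA(x)$, so $x$ lies in none of the $F_U$ in the union. Conversely, if $\ker\sA(x)\nsubseteq W$, then taking $U:=\ker\sA(x)$ gives a subspace with $U\nsubseteq W$ and $x\in F_U$, so $x$ is removed. This is a routine check and not the core of the argument.

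The main obstacle is really located upstream, in Proposition \ref{prop}: the nontrivial content is that kernel containment $\ker A\subseteq\ker B$ can be enforced by an LMI involving a single extra scalar parameter $\la$. Once that is available, the proof of the proposition at hand is essentially a one-line application, together with the observation that choosing $B$ with $\ker B=W$ converts subspace containment in $\R^k$ into kernel containment of matrices.
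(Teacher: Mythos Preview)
Your proof is correct and follows exactly the paper's approach: choose $B$ with $\ker B=W$ and apply Proposition~\ref{prop} to obtain the block-matrix semidefinite representation. Your write-up is in fact more detailed than the paper's (you make the choice of $m$ explicit, spell out why the block matrix is linear in $(\ul X,\La)$, and verify the set-theoretic identity), but the substance is identical.
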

\begin{proof} Choose an $m\times k$-matrix $B$ with $\ker B=W$. By Proposition \ref{prop} we find $$\left\{x\in S\mid \ker \sA(x)\subseteq W\right\}= \left\{ x\in\R^n\mid \exists \la\ \left(\begin{array}{c|c}\sA(x) & B^t \\\hline B & \la\cdot I_m\end{array}\right)\succeq 0 \right\},$$ which is a semidefinite representation. \end{proof}

\begin{Rem} If $S$ has nonempty interior, then the linear matrix polynomial $\sA(\ul X)$ can be chosen such that $\sA(\ul X)\succ 0$ defines $\interior(S)$, see \cite{MR2292953}. Then $$\interior(S)=\{ x\in S\mid \ker \sA(x)\subseteq \{0\} \}$$ is semidefinite representable by Proposition \ref{sub}. This is another way to prove Proposition \ref{sdpint}. 

\end{Rem}

\begin{Ex} \label{ex1}Let $D_2$ be the unit disk in $\R^2,$ defined by the linear matrix polynomial  $$\sA(X_1,X_2):=\left(\begin{array}{cc}1-X_1 & X_2 \\X_2 & 1+X_1\end{array}\right),$$ as above. The faces of $D_2$ are $D_2$ itself and the points on the boundary of $D_2$.
For $(x_1,x_2)\in D_2$ we have $$\ker \sA(x_1,x_2)=\left\{\begin{array}{ll}\{0\} & \mbox{ if }  x_1^2+x_2^2<1 \\\R\cdot (x_2,x_1-1) & \mbox{ if } x_1^2+x_2^2=1, x_1\neq 1\\ \R\cdot (1,0) & \mbox{ if } (x_1,x_2)=(1,0)\end{array}\right.$$ So one checks that for any one-dimensional subspace $W$ of $\R^2$, the set $$\left\{ (x_1,x_2)\in S\mid \ker \sA(x_1,x_2)\subseteq W\right\}$$ is the open unit disk together with one point on the boundary. Since the convex hull of a finite union of  semidefinite representable sets is again semidefinite representable (by \cite{HeltonNieNecSuffSDP}, Theorem 2.2), we obtain that  the open unit disk together with finitely many points on the boundary is semidefinite representable. By Proposition \ref{faceoff}, also $D_2$ with finitely many points on the boundary removed is semidefinite representable.
\end{Ex}

So Propositions \ref{faceoff}, \ref{sdpint} and \ref{sub} tell us that we can either remove finitely many faces or "almost all" of the faces of a spectrahedron and obtain a semidefinite representable set.  But we would also like to do something in between, for example remove a semi-arc from the boundary of the disk. This leads to our main result:

For a convex set $S$ and $z\in S$ we denote by $\mathcal{F}(z,S)$ the set of all faces of $S$ that contain $z$. In particular always $S\in \mathcal{F}(z,S)$. For a set $T\subseteq S$ we denote by $(T\looparrowleft S)$ the union of the interiors of all faces of $S$ that are touched by $T$, i.e. $$(T\looparrowleft S) :=\bigcup_{z\in T}\ \bigcup_{ F\in \mathcal{F}(z,S)}\relint(F).$$

\begin{Thm} \label{mainthm}Let $T\subseteq S\subseteq \R^n$ be semidefinite representable sets. Then $(T\looparrowleft S)$ is also semidefinite representable.
\end{Thm}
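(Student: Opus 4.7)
My plan is to re-express $(T \looparrowleft S)$ as the set of convex combinations $\delta y + (1-\delta) z$ with $y \in S$, $z \in T$, $\delta \in (0,1)$, and then convert that description into a semidefinite representation via the perspective (homogenization) trick for LMIs, together with the $2 \times 2$ gadget for $\delta \in (0,1)$ already used in the proof of Proposition~\ref{sdpint}.

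For the first step, I unwind the definition: since the relative interiors of the faces of $S$ partition $S$, for each $x \in S$ there is a unique face $F_x$ of $S$ with $x \in \relint(F_x)$, so $x \in (T \looparrowleft S)$ if and only if $T \cap F_x \neq \emptyset$. By the lemma characterizing $F_x$, the condition $z \in F_x$ is equivalent to $z \in S$ and $x + \epsilon(x - z) \in S$ for some $\epsilon > 0$; setting $y := x + \epsilon(x - z)$ and $\delta := 1/(1+\epsilon) \in (0,1)$ this becomes $x = \delta y + (1-\delta) z$ with $y \in S$, $z \in T$, $\delta \in (0,1)$. Conversely, any such representation yields $y = x + \frac{1-\delta}{\delta}(x - z) \in S$ and $z \in T \subseteq S$, so $z \in F_x$ and hence $x \in (T \looparrowleft S)$. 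Therefore
$$ (T \looparrowleft S) = \bigl\{\, \delta y + (1-\delta) z \,:\, y \in S,\ z \in T,\ \delta \in (0,1)\, \bigr\}. $$

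For the second step, I fix spectrahedra $\widetilde S \subseteq \R^{n+p}$ and $\widetilde T \subseteq \R^{n+q}$ projecting onto $S$ and $T$, defined by $\sA(y, y') \succeq 0$ and $\sB(z, z') \succeq 0$. I introduce auxiliary variables $u := \delta y$, $u' := \delta y'$, $v := (1-\delta) z$, $v' := (1-\delta) z'$, and form the perspective LMI: writing $\sA(y, y') = A_0 + \sum_i y_i A_i + \sum_j y'_j A'_j$, define
$$ \sA^h(\delta, u, u') := \delta A_0 + \sum_i u_i A_i + \sum_j u'_j A'_j, $$
which is linear in $(\delta, u, u')$. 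For $\delta > 0$ one has $\sA^h(\delta, u, u') = \delta\, \sA(u/\delta, u'/\delta)$, so $\sA^h \succeq 0$ iff $(u/\delta, u'/\delta) \in \widetilde S$, i.e.\ iff there exists $y \in S$ with $u = \delta y$. The same construction with $1 - \delta$ in place of $\delta$ handles $\sB$. Combining the two perspective LMIs, the pair of $2 \times 2$ LMIs from the proof of Proposition~\ref{sdpint} encoding $\delta \in (0,1)$, and the linear identity $x = u + v$ into one block-diagonal system gives a spectrahedron in the variables $(x, \delta, u, u', v, v', \lambda_1, \lambda_2)$, whose projection to $x$ is exactly the right-hand side of the displayed equation. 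This is the desired semidefinite representation.

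The conceptual hurdle is really the first step: recognizing that the face-theoretic description of $(T \looparrowleft S)$ collapses to the clean convex-combination form $x = \delta y + (1-\delta) z$. After that, the homogenization is essentially the same device that underlies fact~(iv) of the introduction (semidefinite representability of convex hulls of finite unions), and the remaining bookkeeping is routine.
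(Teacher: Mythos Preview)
Your argument is correct, and it is a genuinely different route from the paper's.

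The paper first treats the case where $S$ is a spectrahedron: it identifies $\bigcup_{F\in\mathcal F(z,S)}\relint(F)$ with $\{x\in S\mid \ker\sA(x)\subseteq\ker\sA(z)\}$ and then invokes Proposition~\ref{prop} (the block-matrix lemma) to get a linear matrix inequality quantified over $z\in T$ and a scalar $\lambda$. The general case is then reduced to this one by lifting $T$ to $\widetilde T=\pr^{-1}(T)\cap\widetilde S$ and verifying that $\pr\bigl((\widetilde T\looparrowleft\widetilde S)\bigr)=(T\looparrowleft S)$; this last step uses Corollary~\ref{intproj} and the face structure of projections. By contrast, you never touch kernels or Proposition~\ref{prop}: you rewrite $(T\looparrowleft S)$ purely convex-geometrically as $\{\delta y+(1-\delta)z: y\in S,\ z\in T,\ \delta\in(0,1)\}$ and then apply the perspective/homogenization trick on \emph{both} sides. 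This sidesteps exactly the obstruction flagged in the remark after Proposition~\ref{sdpint} (the bilinear term in $(\delta,z)$), by passing to the variables $u=\delta y$, $v=(1-\delta)z$, which is precisely the device behind fact~(iv) of the introduction. Your proof is thus more elementary and handles arbitrary semidefinite representable $S$ uniformly, without the spectrahedron-first reduction or the projection lemma; the paper's proof, on the other hand, yields a somewhat leaner representation in the spectrahedron case and makes explicit the link between $(T\looparrowleft S)$ and the kernel stratification of $\sA$.
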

\begin{proof} First assume that $S$ is a spectrahedron. Let $\sA(\ul X)$ be a $k$-dimensional symmetric linear matrix polynomial defining $S$. For any $z\in T$ we have \begin{align*}\bigcup_{F\in \mathcal{F}(z,S) } \relint(F) & = \left\{ x\in S\mid z\in F_x\right\} \\ &= \left\{ x\in \R^n\mid \sA(x)\succeq 0,  \ker \sA(x)\subseteq \ker \sA(z)\right\}. \end{align*} So by Proposition \ref{prop} we have $$(T\looparrowleft S)=\left\{ x\in\R^n\mid \exists z\in T\ \exists \la \ \left(\begin{array}{c|c}\sA(x) & \sA(z) \\\hline \sA(z) & \la\cdot I_k\end{array}\right)\succeq 0\right\},$$ which is a semidefinite representation.

Now let $S$ be semidefinite representable. So there is a spectrahedron  $\widetilde{S}$ in some $\R^{n+m}$ that projects onto $S$ via the projection map $\pr\colon \R^{n+m}\rightarrow \R^n$. Define $$\widetilde{T}:=\pr^{-1}(T)\cap \widetilde{S}=\left\{ (x,y)\in \R^{n+m}\mid (x,y)\in \widetilde{S} , x \in T\right\},$$ which is clearly a semidefinite representable subset of $\widetilde{S}.$ We now know that $(\widetilde{T}\looparrowleft \widetilde{S})$ is semidefinite representable, so we  finish the proof by showing $$ \pr\left((\widetilde{T}\looparrowleft \widetilde{S})\right) = (T \looparrowleft S).$$

For "$\subseteq$" let $(x,y)\in (\widetilde{T}\looparrowleft \widetilde{S})$  be given. We have to show $x\in (T\looparrowleft S)$. There is some $(v,w)\in \widetilde{T}$ and some face $\widetilde{F}\in \mathcal{F}((v,w),\widetilde{S})$ such that $(x,y)\in\relint(\widetilde{F}).$ So there is some $\ep>0$ such that $(x,y)+\ep\left((x,y)-(v,w)\right)\in \widetilde{F}.$ So $x+\ep(x-v)\in\pr(\widetilde{F})\subseteq S.$ This implies $v\in F_x$, so $F_x\in\mathcal{F}(v,S)$ and clearly $x\in\relint(F_x)$. Since $v\in T$ this proves $x\in (T\looparrowleft S)$.

For "$\supseteq$" let $F$ be a face of $S$ that contains some element from $T$. Then $\widetilde{F}:=\pr^{-1}(F) \cap \widetilde{S}$ is a face of $\widetilde{S}$ that contains some element from $\widetilde{T}$. By Corollary \ref{intproj} we find $$\pr\left(\relint(\widetilde{F})\right)= \relint\left( \pr(\widetilde{F})\right)= \relint(F),$$ which proves the desired inclusion.
\end{proof}

\begin{Rems}
\begin{itemize}
\item[(0)] One has $(S\looparrowleft S)=S$ and $(\emptyset\looparrowleft S)=\emptyset$ for any convex set $S$. Clearly $T\subseteq T'\subseteq S$ implies $(T\looparrowleft S)\subseteq (T'\looparrowleft S).$

\item[(i)] For  a point $x\in \relint(S)$ one has $(\{x\}\looparrowleft S)= \relint(S).$ So Theorem \ref{mainthm} generalizes Proposition \ref{sdpint} from above.

\item[(ii)] $(T\looparrowleft S)$ always contains $T$, and also $\relint(S)$ as long as $T\neq \emptyset$. 

\item[(iii)] The semidefinite representation of $(T\looparrowleft S)$ is explicitly given in the proof of Theorem \ref{mainthm}. So one for example checks that it preserves rational coefficients from a semidefinite representation of $T$ and $S$.
\end{itemize}

\end{Rems}

\begin{Ex} Let $D_2$ be the unit disk in $\R^2$. We find that we can remove any arc in the boundary of $D_2$ (and therefore any semialgebraic subset of the boundary) and obtain a semidefinite representable set. This is implied by Theorem \ref{mainthm}. For any arc in the boundary of $D_2$ one simply has to provide a semidefinite representable subset $T$ of $D_2$ that touches the boundary of $D_2$ precisely in the points that do not belong to the given arc. This is always possible, as one easily checks. 
\end{Ex}

\begin{Ex}Consider the following subset $S$ of $\R^2$:  $$S=D_2\cup \left([-1,1]\times [0,1]\right).$$ $S$ is not a spectrahedron, since it is not even basic closed semialgebraic (and has a non-exposed face). But it is semidefinite representable, which for example follows from  Theorem 2.2 in Helton and Nie \cite{HeltonNieNecSuffSDP}. Now consider the subset $T$ of $S$ defined by $$T=\{(x,y)\in S\mid \vert x \vert -1  \leq y \leq 0  \}.$$ Then $(T\looparrowleft S)$ consists of  $\interior(S)$ together with the  point $ (0,-1)$ and the set $\{-1,1\}\times [0,1).$
Since $S$ and $T$ are semidefinite representable, so is $(T\looparrowleft S)$.
\end{Ex}

{\linespread{1}\bibliographystyle{plain} \bibliography{references}}
\end{document}